\newtheorem{theorem}{Theorem}[section]
\newtheorem{example}[theorem]{Example}
\newtheorem{conjecture}[theorem]{Conjecture}
\newtheorem{question}[theorem]{Question}
\theoremstyle{definition} 
\numberwithin{theorem}{section}
\newcommand{\RR}{\mathbb{R}}
\newcommand{\CC}{\mathbb{C}}
\newcommand{\QQ}{\mathbb{Q}}
\newcommand{\ZZ}{\mathbb{Z}}
\title{Sharp Szemer\' edi-Trotter constructions from arbitrary number fields}
\author{Gabriel Currier}
\begin{document}
\maketitle
\begin{abstract}In this note, we describe an infinite family of sharp Szemer\'{e}di-Trotter constructions. These constructions are cartesian products of arbitrarily high dimensional generalized arithmetic progressions (GAPs), where the bases for these GAPs come from arbitrary number fields over $\mathbb{Q}$. This can be seen as an extension of a recent result of Guth and Silier, who provided similar constructions based on the field $\mathbb{Q}(\sqrt{k})$ for square-free $k$. However, our argument borrows from an idea of Elekes, which produces cartesian products where the parts are of unequal size. This significantly simplifies the analysis and allows us to easily give constructions coming from any number field.
\end{abstract}

\section{Introduction} 

An \emph{incidence} between a pointset $\mathcal{P}$ and a set of curves $\mathcal{C}$ in the plane is a tuple $(p,c) \in \mathcal{P} \times \mathcal{C}$ such that the point $p$ is on the curve $c$. In 1983, Szemer\'edi and Trotter \cite{SZTR} gave a bound on the number of incidences between points and lines.

\begin{theorem}\label{szt1}
The maximum number of incidences between a set of $N$ points and a set of $M$ lines in $\RR^2$ is $O(N^{2/3}M^{2/3} + N + M)$\footnote{Here, we say $F = O(G)$ or equivalently $G = \Omega(F)$ if there exists a constant $C$ such that $F \le CG$.}.
\end{theorem}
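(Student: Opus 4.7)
The plan is to follow Sz\'ekely's elegant proof via the crossing lemma from graph theory. I would first invoke the crossing lemma: if $G$ is drawn in the plane with $|V|$ vertices and $|E|\ge 4|V|$ edges, then its crossing number satisfies $\operatorname{cr}(G) \ge c\,|E|^3 / |V|^2$ for an absolute constant $c>0$. This is a probabilistic amplification of the weaker bound $\operatorname{cr}(G) \ge |E|-3|V|+6$ coming from Euler's formula, obtained by independently retaining each vertex with a well-chosen probability $p$ and then optimizing over $p$.

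Next I would build the incidence graph. Given a pointset $\mathcal{P}$ of size $N$, a line set $\mathcal{L}$ of size $M$, and $I$ incidences, I form a plane graph $G$ whose vertices are the points of $\mathcal{P}$ and whose edges are the open segments between consecutive incident points along each line. A line containing $k_i$ incident points contributes $k_i-1$ edges, so after discarding lines with no incidences (harmless, as it only decreases $M$), the edge count is $|E|=I-M$. Since two distinct lines meet in at most one point, any crossing between two edges of $G$ arises from a distinct pair of lines, yielding $\operatorname{cr}(G)\le \binom{M}{2}$.

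Applying the crossing lemma in the regime $|E|\ge 4|V|$ gives $c(I-M)^3/N^2 \le \binom{M}{2}$, and rearranging yields $I-M = O(N^{2/3}M^{2/3})$, hence $I = O(N^{2/3}M^{2/3} + M)$. If instead $|E| < 4|V|$, the trivial inequality gives $I < 4N + M$. Combining both cases produces the desired bound $I = O(N^{2/3}M^{2/3} + N + M)$.

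The only real obstacle is the crossing lemma itself, but this is a standard tool whose proof requires just Euler's formula together with a short probabilistic deletion argument. Once that is in hand, the application to incidences is essentially mechanical; the only care needed is the per-line edge-count bookkeeping and the case split on whether $|E|\ge 4|V|$. A nice feature of this approach is that it extends verbatim to any family of curves where two curves meet in a bounded number of points, which is precisely the setting relevant to the sharp constructions studied in the rest of the paper.
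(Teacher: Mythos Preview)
Your argument is the standard Sz\'ekely crossing-number proof and is correct as written; the edge count $|E|=I-M$, the crossing bound $\operatorname{cr}(G)\le \binom{M}{2}$, and the case split on $|E|\ge 4|V|$ are all handled properly.

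However, there is nothing to compare against: the paper does not prove Theorem~\ref{szt1}. It is stated purely as background, attributed to Szemer\'edi and Trotter \cite{SZTR}, and then immediately reformulated as the $r$-rich line version (Theorem~\ref{szt2}). The only proof the paper supplies is that of Theorem~\ref{mainthm}, the new construction. So your proposal is not reproducing or diverging from anything in the paper --- you are simply filling in a result the authors take as known. If you want it to match the paper's treatment, the appropriate ``proof'' is a one-line citation.
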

\noindent This was extended to points and lines in $\CC^2$ in \cite{TO} and \cite{ZA}. Often, it is easier to use the following theorem, which can be shown to be equivalent to Theorem \ref{szt1}. This result is stated in terms of $r$-rich lines; that is, lines that contain at least $r$ points of the pointset.
\begin{theorem}\label{szt2}
A set of $N$ points in the plane determines at most $O(\frac{N^2}{r^3} + \frac{N}{r})$ $r$-rich lines.
\end{theorem}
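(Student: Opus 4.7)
The plan is to deduce Theorem \ref{szt2} directly from Theorem \ref{szt1} by a short counting argument, which is the standard and cleanest route given that Theorem \ref{szt1} is already available. Let $M$ denote the number of $r$-rich lines determined by the $N$-point set $\mathcal{P}$. Since each such line contains at least $r$ points of $\mathcal{P}$, the total number of incidences $I$ between $\mathcal{P}$ and the collection of $r$-rich lines is at least $rM$. On the other hand, applying Theorem \ref{szt1} to this configuration of $N$ points and $M$ lines furnishes the matching upper bound
\[ rM \;\leq\; I \;\leq\; C\!\left(N^{2/3} M^{2/3} + N + M\right) \]
for an absolute constant $C>0$.

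From here, the plan is to finish via a standard trichotomy. The right-hand side is within a factor of $3$ of its largest term, so at least one of the following three inequalities holds. If $rM \leq 3C\,N^{2/3}M^{2/3}$, rearranging gives $M = O(N^2/r^3)$. If $rM \leq 3CN$, then $M = O(N/r)$. If $rM \leq 3CM$, then $r \leq 3C$ is bounded by an absolute constant; in this regime the claimed bound $O(N^2/r^3 + N/r)$ is $\Omega(N^2)$, and holds trivially since $N$ points determine at most $\binom{N}{2}$ lines. Combining the cases yields $M = O(N^2/r^3 + N/r)$, as required.

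There is no substantive obstacle in this direction: the argument is a one-line application of Theorem \ref{szt1} followed by case analysis, and the only subtlety is making sure the bounded-$r$ case is absorbed into the trivial bound on the total number of lines. (The converse reduction, from Theorem \ref{szt2} back to Theorem \ref{szt1}, can be carried out by partitioning the lines into dyadic richness classes $L_k := \{\ell : 2^k \leq |\ell \cap \mathcal{P}| < 2^{k+1}\}$ and summing $\sum_k 2^k |L_k|$ against the bound of Theorem \ref{szt2}, but this direction is not required for the present application.)
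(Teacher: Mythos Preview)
Your argument is correct and is the standard deduction of Theorem~\ref{szt2} from Theorem~\ref{szt1}. Note, however, that the paper does not actually supply its own proof of Theorem~\ref{szt2}: it simply states the result, remarks that it ``can be shown to be equivalent to Theorem~\ref{szt1},'' and cites~\cite{SZTR}. So there is no paper-proof to compare against; what you have written is precisely the routine equivalence argument the paper alludes to but omits.
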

\noindent These theorems are known to be tight. If $r > N^{1/2}$, in which case $\frac{N}{r}$ term dominates, then the constructions are trivial; one may simply take $r$ points on each of an arbitrary collection of $N/r$ lines. Historically, there were two examples known to match the bound when $r \le N^{1/2}$, where $\frac{N^2}{r^3}$ is the dominant term. Erd\H os showed that the $N^{1/2} \times N^{1/2}$ integer grid has $\Omega(\frac{N^2}{r^3})$ $r$-rich lines, and in \cite{EL2}, Elekes gave a simple argument showing the same for the $r \times N/r$ integer grid.
\\\\
\noindent For the last $20$ or so years, these have remained the only known constructions when $r \le N^{1/2}$; this left open the possibility that all sharp examples were integer grids, or simple transformations of them. However, very recently Guth and Silier \cite{GUSI} gave examples based on pointsets of the form $(a+b\sqrt{k},c+d\sqrt{k})$, where $0 \le |a|,|b|,|c|,|d| \le \frac{N^{1/4}}{2}$ and $k$ is a square-free integer. This is a $N^{1/2} \times N^{1/2}$ cartesian product and can be seen as analogous to Erd\H os' construction, but coming from the field $\QQ(\sqrt{k})$. The result we present here will instead generalize Elekes' construction. This gives a $r \times N/r$ cartesian product, but coming from \emph{any} finite dimensional extension of $\QQ$.
\\\\
To discuss this result, we need some definitions. We say that a field extension $K$ of $\QQ$ is a \emph{number field} if its dimension as a $\QQ$-vector space is finite. Furthermore, if $\alpha \in K$ is the root of a monic polynomial with integer coefficients, we say that $\alpha$ is an \emph{algebraic integer}. Now, if $\Lambda = \{\lambda_1,\dots,\lambda_n\}$ is a basis for $K$ over $\QQ$, we say that $\Lambda$ is a \emph{nice} basis if $\lambda_i\lambda_j$ is a $\ZZ$-linear combination of elements of $\Lambda$, for all $1\le i,j\le n$. One example of a nice basis is any integral basis - that is, a basis $\Lambda$ for $K$ such that any algebraic integer is a $\ZZ$-linear combination of the elements of $\Lambda$. Such a basis exists for any number field. Furthermore, if $[K:\QQ] = n$ and $\alpha \in K$ is an algebraic integer of degree $n$ over $\QQ$, then the \emph{power basis} $1,\alpha,\dots,\alpha^{n-1}$ is also a nice basis.
\\\\
Finally, for integers $k_1,k_2$, we let $[k_1,k_2] := \{k_1,k_1+1, \dots, k_2\}$. Furthermore, for a positive integer $m$, we define $A_m(\Lambda) := \{a_1\lambda_1 + \dots + a_n\lambda_n : |a_i| \in [0,\frac{m^{1/n}}{2}]\}$ and note that $|A_m(\Lambda)| = m$. We will generally omit floors and ceilings unless necessary for clarity.
\\\\
In this language, Elekes' construction shows that the $N$-element pointset $A_{r}(\{1\}) \times A_{N/r}(\{1\})$ determines $\Omega(N^2/r^3)$ $r$-rich lines for any $r \le N^{1/2}$. Analogously, we will prove the following result.

\begin{theorem}\label{mainthm}
Let $N,r$ be positive integers, with $r \le N^{1/2}$. Then, for any nice basis $\Lambda$ of a number field $K$ over $\QQ$, the set $A_r(\Lambda) \times A_{N/r}(\Lambda)$ determines $\Omega(N^2/r^3)$ \footnote{The implied constant here is allowed to depend on $K$ and $\Lambda$ but not $N$ or $r$} $r$-rich lines.
\end{theorem}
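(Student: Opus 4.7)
My plan is to adapt Elekes' unbalanced grid argument to the GAP $A_r(\Lambda) \times A_{N/r}(\Lambda)$. In the classical integer case, Elekes lower-bounds the number of $r$-rich lines by counting lines $y = mx + c$ with $m$ drawn from an integer range of size $\Theta(N/r^2)$ and $c$ from an integer range of size $\Theta(N/r)$; for each such $(m,c)$ and each $x \in [0,r]$, the point $(x, mx+c)$ stays in the grid, so the line contains $r$ grid points. I will mimic this with the basis $\Lambda$ playing the role of the standard basis of $\ZZ$.

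First I will record the structure constants of $\Lambda$: write $\lambda_i \lambda_j = \sum_k c_{ij}^k \lambda_k$ with $c_{ij}^k \in \ZZ$ (this is exactly the niceness hypothesis), and set $L := \max_{i,j,k}|c_{ij}^k|$ and $n := [K:\QQ]$; both depend only on $K$ and $\Lambda$. The candidate rich lines will be $\ell_{m,c}: y = mx + c$ for $(m,c) \in A_s(\Lambda) \times A_t(\Lambda)$, where $s := \lfloor \gamma N/r^2 \rfloor$ and $t := \lfloor \delta N/r \rfloor$ for constants $\gamma,\delta > 0$ to be chosen small in terms of $n$ and $L$.

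Next I will expand. Writing $m = \sum_i \mu_i \lambda_i$, $x = \sum_j a_j \lambda_j \in A_r(\Lambda)$, and $c = \sum_k c_k \lambda_k$, niceness gives
\[ mx + c = \sum_k \Bigl( c_k + \sum_{i,j} \mu_i a_j c_{ij}^k \Bigr) \lambda_k. \]
Each coefficient of $\lambda_k$ is an integer of absolute value at most $t^{1/n}/2 + \tfrac{n^2 L}{4}(sr)^{1/n}$. Choosing $\gamma, \delta$ small enough in terms of $n, L$ (e.g.\ $\delta = 2^{-n}$ and $\gamma = (n^2 L)^{-n}$) makes this bound at most $(N/r)^{1/n}/2$, so $mx + c \in A_{N/r}(\Lambda)$. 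Hence $\ell_{m,c}$ passes through the $r$ distinct points $\{(x, mx+c) : x \in A_r(\Lambda)\}$, all lying in $A_r(\Lambda) \times A_{N/r}(\Lambda)$, and is therefore $r$-rich. Distinct pairs $(m,c)$ determine distinct lines, producing $|A_s(\Lambda)| \cdot |A_t(\Lambda)| = \Theta(st) = \Omega(N^2/r^3)$ rich lines.

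The only real obstacle is verifying the coefficient estimate in the display above and choosing $\gamma, \delta$ appropriately, and this is exactly where niceness is indispensable: integrality of the $c_{ij}^k$ is what keeps $mx + c$ a $\ZZ$-linear combination of $\Lambda$, allowing it to fall into a GAP of the form $A_\bullet(\Lambda)$ rather than a dilation of it with irrational coefficients. For a general $\QQ$-basis of $K$ one would pick up denominators and the argument would fail.
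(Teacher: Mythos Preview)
Your proposal is correct and follows essentially the same argument as the paper: the same line set (slopes in $A_{\gamma N/r^2}(\Lambda)$ with $\gamma=(n^2L)^{-n}$, intercepts in $A_{\delta N/r}(\Lambda)$ with $\delta=2^{-n}$), the same expansion of $mx+c$ via the structure constants $c_{ij}^k$, and the same coefficient bound showing $mx+c\in A_{N/r}(\Lambda)$. The only differences are cosmetic (notation, the explicit floor functions, and your closing remark on why niceness is needed).
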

\noindent In particular, the pointsets generated here are cartesian products of arbitrarily high-dimensional generalized arithmetic progressions, and no such constructions were previously known. Furthermore, this gives a variety of sharp constructions in $\CC^2$. To our knowledge, prior to \cite{GUSI}, the only known sharp constructions in $\CC^2$ were simple transformations of constructions from $\RR^2$.
\\\\
\noindent It is worth taking a moment to discuss the motivation for these results. These constructions are part of a larger project to understand the \emph{inverse problem} for Szemer\' edi-Trotter; that is, attempting to classify configurations that can match the bound in Theorems \ref{szt1} and \ref{szt2} up to a constant. The Szemer\'edi-Trotter theorem and its relatives have seen numerous applications to a diverse range of problems (see e.g. \cite{BOBO,BODE,EL,ELRO} for some examples or \cite{DV,EL2, TAVU} for additional results) and have been instrumental in the development of discrete geometry and additive combinatorics. Thus, a reasonable characterization of the extremal configurations would immediately translate into information about (and potentially better bounds on) a diverse range of problems. In our view, understanding the possible constructions is an integral part of making progress on this difficult problem. For further references on the inverse problem, see for example \cite{EL3,EL5,KASI,PERORUWA,RUSH,SO}.
\\\\
\noindent As a second motivation, these constructions give new possibilities for pointsets in other incidence geometry problems. In general, many incidence geometry results are not known to be tight. The best lower bounds often come from finding an affine copy of some curve that intersects an integer grid as much as possible, and then simply taking many translates of that curve to create many incidences. It would be helpful to have other good candidates for pointsets to test when attempting to generate lower bounds on these problems. Furthermore, there is evidence that the pointsets described in Theorem \ref{mainthm} might be useful for this purpose, since they can generate many incidences with another polynomial curve (parabolas) as well. This is discussed in more depth in the section \ref{conc}.
\\\\
\noindent As a final motivation, for several other incidence-type problems, the best known lower bounds are obtained directly from sharp Szemer\'edi-Trotter constructions. In other words, to generate constructions for these other problems, we take a sharp Szemer\'edi-Trotter construction and perform a basic transformation on it, taking e.g. points to circles or lines to ellipses. See for example \cite{GOMOWH, SHSMVADE}. Thus, the constructions given by Theorem \ref{mainthm} translate immediately into new best-known constructions for other interesting problems.

\section{Construction}
For any nice basis $\Lambda = \{\lambda_1,\dots,\lambda_n\}$ of an $n$-dimensional number field $K$, let $c_k^{i,j}$ be the integers defined by $\lambda_i\lambda_j = c_1^{i,j}\lambda_1 + \dots + c_n^{i,j}\lambda_n$, and define $C_\Lambda := \max_{i,j,k} |c_k^{i,j}|$. To complete the proof of Theorem \ref{mainthm}, we will show that $A_r(\Lambda) \times A_{N/r}(\Lambda)$ has $\left(\frac{1}{(2n^2C_\Lambda)^n}\right)\frac{N^2}{r^3}$ $r$-rich lines.

\begin{proof}
Our line set $L$ will be defined as follows

$$L = \{y = mx + b : m \in A_{\frac{N}{(C_\Lambda n^2)^{n}r^2}}(\Lambda), b \in A_{\frac{N}{2^nr}}(\Lambda)\}.$$ 
We note that $|L| = \frac{N^2}{(2C_\Lambda n^2)^nr^3} = \Omega(\frac{N^2}{r^3})$. Thus, it remains to be shown only that each line is $r$-rich. Let $\ell \in L$ be the line $y = mx+b$ where $m = m_1\lambda_1 + \dots + m_n\lambda_n$ and $b = b_1\lambda_1 + \dots + b_n\lambda_n$, and let $x' = x_1\lambda_1 + \dots + x_n\lambda_n \in A_r(\Lambda)$. Then, we see 

\begin{align*}
y' &:=mx' +b = (m_1\lambda_1 + \dots + m_n\lambda_n)(x_1\lambda_1 + \dots + x_n\lambda_n) + (b_1\lambda_1 + \dots + b_n\lambda_n)\\
& = \sum_{1 \le i,j \le n} m_ix_j\lambda_i\lambda_j + (b_1\lambda_1 + \dots + b_n\lambda_n) \\
&= \sum_{1 \le k\le n}\left(\sum_{1\le i,j\le n} m_ix_jc_k^{i,j}+b_k\right)\lambda_k
\end{align*}
We note that for any $i,j,k$ we have $|m_ix_jc_k^{i,j}| \le \left(\frac{N^{1/n}}{2C_\Lambda n^2r^{2/n}}\right)\left(\frac{r^{1/n}}{2}\right)C_\Lambda = \frac{N^{1/n}}{4n^2r^{1/n}}$ and thus $\left|\sum_{1\le i,j\le n} m_ix_jc_k^{i,j}\right| \le \frac{N^{1/n}}{4r^{1/n}}$. Additionally, for each $k$ we have $$|b_k| \le \frac{N^{1/n}}{4r^{1/n}}$$ and so we conclude $$\left|\sum_{1\le i,j\le n} m_ix_jc_k^{i,j}+b_k\right| \le  \frac{N^{1/n}}{2r^{1/n}}$$ and thus $y' \in A_{N/r}(\Lambda)$. In particular, $(x',y') \in A_r(\Lambda) \times A_{N/r}(\Lambda)$ and since there are $r$ choices for $x'$, we conclude that $\ell$ is $r$-rich.
\end{proof}

\section{Concluding remarks}\label{conc}

For this final section we focus on the case $r \approx N^{1/3}$, as this produces roughly equal numbers of points and lines and is the most frequently discussed in the literature. However, we note that the ideas here should apply to any $r \le N^{1/2}$.
\\\\
In \cite{SHSI}, they show that one can ``interpolate'' between the integer grid constructions given by Erd\H os' ($N^{1/2} \times N^{1/2}$) and Elekes' ($N^{1/3} \times N^{2/3}$). That is, for any $1/3 < \alpha < 1/2$, the $N^{\alpha} \times N^{1-\alpha}$ integer grid determines $\Omega(N)$ $N^{1/3}$-rich lines, which by Theorem \ref{szt2} is maximum possible. We suspect that in general something similar is possible. We make the following conjecture which, if true, would include all known constructions.
\begin{conjecture}\label{conj1}
For any number field $K$, nice basis $\Lambda$, and real number $1/3 \le \alpha \le 1/2$, the set $A_{N^{\alpha}}(\Lambda) \times A_{N^{1-\alpha}}(\Lambda)$ determines $\Omega(N)$ $N^{1/3}$-rich lines.
\end{conjecture}

\noindent We've shown here the case that $\alpha = 1/3$, and in \cite{GUSI}, Guth and Silier showed the case when $\alpha = 1/2$ and $\Lambda = \{1,\sqrt{k}\}$. However, even in the limited case shown in \cite{GUSI}, the analysis is somewhat involved, and the difficulty of extending their argument appears to increase with the degree of the number field. Thus, it would be interesting to see if there is a way to simplify their analysis to show, at least, the case of $\alpha = 1/2$ for some other number fields. Furthermore, since conjecture \ref{conj1} covers all known constructions, it would of course be of particular interest to identify \emph{any} sharp construction that is not a simple transformation of one of these pointsets.
\\\\\
\noindent As mentioned in the introduction, there is also some evidence that these pointsets might serve as useful examples for other incidence problems.
\begin{example}
Let $\Lambda$ be a nice basis for a number field. We observe first that the parabola $y=x^2$ has $\Omega(N^{1/3})$ incidences with the pointset $A_{N^{1/3}}(\Lambda) \times A_{N^{2/3}}(\Lambda)$. To see this, we need note only that a positive fraction of squares from $A_{N^{1/3}}(\Lambda)$ are in $A_{N^{2/3}}(\Lambda)$. Taking translates of the parabola along vectors from $A_{N^{1/3}}(\Lambda) \times A_{N^{2/3}}(\Lambda)$, we get a collection of $\Omega(N)$ parabolas and $N$ points that determine $\Omega(N^{4/3})$ incidences. This is the maximum-possible number of incidences between $N$ points and $N$ translates of a fixed parabola, by Szemer\'edi-Trotter-type arguments (see e.g. \cite{PASH})
\end{example} 
\noindent What is not known, however, is how these pointsets behave with respect to other polynomial curves. As a specific example, the famous Erd\H os Unit Distance Problem asks for the maximum number of incidences between $N$ points and $N$ unit circles in $\RR^2$. Thus, the following would be of particular interest.
\begin{question}
What is the maximum possible number of incidences between a unit circle and an affine copy of the set $A_{N^{1/2}}(\Lambda) \times A_{N^{1/2}}(\Lambda)$?
\end{question}

\noindent \textbf{Acknowledgements:} I would like to thank Larry Guth, Simone Maletto, Olivine Silier, J\'ozsef Solymosi, Ethan White, and Chi Hoi Yip for helpful discussions and comments. The author is supported in part by a Killam Doctoral Scholarship from the University of British Columbia.


\begin{thebibliography}{9}


\bibitem{BOBO} E. Bombieri, J. Bourgain, \emph{A problem on sums of two squares}, International Mathematics Research Notices 11 (2015) 3343–3407

\bibitem{BODE} J. Bourgain, C. Demeter, \emph{Proof of the $\ell^2$ decoupling conjecture}, Annals of Mathematics 182 (2015) 351–389

\bibitem{DV} Z. Dvir, \emph{Incidence theorems and their applications}, Foundations and Trends in Theoretical Computer Science 6 (2012) 257-393

\bibitem{EL} G. Elekes, \emph{On the number of sums and products}, Acta Arithmetica 81 (1997) 365-367

\bibitem{EL2} G. Elekes, \emph{Sums versus products in number theory, algebra and Erd\H os geometry}, Paul Erd\H os and his Mathematics II, 11 (2001) 241–290

\bibitem{EL3} G. Elekes, \emph{On linear combinatorics I. Concurrency, an algebraic approach} Combinatorica 17 (1997) 447–458

\bibitem{EL5} G. Elekes, \emph{On linear combinatorics III. Few directions and distorted lattices} Combinatorica 19 (1999) 43-53

\bibitem{ELRO} G. Elekes, L. R\'onyai, \emph{A combinatorial problem on polynomials and rational functions}, Journal of Combinatorial Theory Series A 89 (2000) 1-20

\bibitem{GOMOWH} R. Goenka, K. Moore, E. P. White, \emph{Improved estimates on the number of unit perimeter triangles}, arXiv preprint (2023)  arXiv:2304.03920

\bibitem{GUSI} L. Guth, O. Silier, \emph{Sharp Szemer\'edi-Trotter constructions in the plane}, arXiv preprint (2021) arXiv:2112.00306

\bibitem{KASI} N.H. Katz, O. Silier, \emph{Structure of cell decompositions in extremal Szemer\' edi-Trotter examples}, arXiv preprint (2023) arXiv:2303.17186

\bibitem{PASH} J. Pach and M. Sharir, \emph{On the number of incidences between points and curves}, Combin. Probab. Comput., 7 (1998), pp. 121-127

\bibitem{PERORUWA} G. Petridis, O. Roche-Newton, M. Rudnev, A. Warren, \emph{An energy bound in the affine group}, International Mathematics Research Notices 2 (2022) 1154–1172

\bibitem{RUSH} M. Rudnev, I. Shkredov \emph{On the growth rate in $SL_2(\mathbb{F}_p)$, the affine group and sum-product type implications} Mathematika 68 (2022) 738-783

\bibitem{SHSMVADE} M. Sharir, S. Smorodinsky, C. Valculescu, F. de Zeeuw, \emph{Distinct distances between points and lines}, Computational Geometry 69 (2018) 2-15

\bibitem{SHSI} A. Sheffer, O. Silier, \emph{A structural Szemer\'edi-Trotter theorem for cartesian products} arXiv preprint (2021) arXiv:2110.09692

\bibitem{SO} J. Solymosi, \emph{Dense arrangements are locally very dense i}, SIAM Journal on Discrete Mathematics 20 (2006)  623–627

\bibitem{SZTR} E. Szemer\'edi and W. T. Trotter, \emph{Extremal problems in discrete geometry}, Combinatorica 3 (1983), no. (3-4), 381--392

\bibitem{TAVU} T. Tao, V. Vu \emph{Additive Combinatorics} Cambridge Studies in Advanced Mathematics, 105. Cambridge University Press, Cambridge, 2006.

\bibitem{TO} C. T\' oth, \emph{The Szemer\'edi-Trotter theorem in the complex plane}, Combinatorica 35 (2015) 95-126

\bibitem{ZA} J. Zahl, \emph{A Szemer\'edi-Trotter type theorem in $\RR^4$}, Discrete \& Computational Geometry 54 (2015) 513-572
\end{thebibliography}
\end{document}